\newcommand\bigghat[1]{%
\savestack{\tmpbox}{\stretchto{%
  \scaleto{%
    \scalerel*[\widthof{\ensuremath{#1}}]{\kern-.6pt\bigwedge\kern-.6pt}%
    {\rule[-\textheight/2]{1ex}{\textheight}}
  }{\textheight}%
}{0.5ex}}%
\stackon[1pt]{#1}{\tmpbox}%
}
\newtheorem{theorem}{Theorem}
\newtheorem*{definition}{Definition}
\newcommand{\prob}[0]{\textbf{$\mathbf{P}$}}
\newcommand{\xsupp}[0]{\mathbb{X}}
\newcommand{\xsam}[0]{\mathbf{X}}
\newcommand{\femp}[1]{\bigghat{F_{#1}}(x)}
\title{A Constructive Proof of the Glivenko-Cantelli Theorem}
\author{By Daniel Salnikov }
\date{October 2021}
\begin{document}
\nocite{*}
\maketitle
\section{Abstract}
The Glivenko-Cantelli theorem states that the empirical distribution function converges uniformly almost surely to the theoretical distribution for a random variable $X \in \mathbb{R}$. This is an important result because it establishes the fact that sampling does capture the dispersion measure the distribution function $F$ imposes. In essence, sampling permits one to learn and infer the behavior of $F$ by only looking at observations from $X$. The probabilities that are inferred from samples $\xsam$ will become more precise as the sample size increases and more data becomes available. Therefore, it is valid to study distributions via samples. The proof present here is constructive, meaning that the result is derived directly from the fact that the empirical distribution function converges pointwise almost surely to the theoretical distribution. The work includes a proof of this preliminary statement and attempts to motivate the intuition one gets from sampling techniques when studying the regions in which a model concentrates probability. The sets where dispersion is described with precision by the empirical distribution function will eventually cover the entire sample space.
\section{Preliminaries}
The following definitions will be used throughout the proof.
\begin{definition}{Empirical Distribution Function \cite{mood} \\}
Let $X \in \mathbb{R}$ be a random variable, and $\xsam$ be a random sample from $X$, the empirical distribution function assigns probabilities by counting how many observations are smaller than $x \in \mathbb{R}$, in essence:
\begin{equation*}
    \bigghat{F_n}(x) = \frac{1}{n} \sum\limits_{i=1}^{n} I(X_i \leq x)
\end{equation*}
\end{definition}
As a preliminary result, the work includes a proof of the following theorem:
\begin{theorem}
Let $X \in \mathbb{R}$ be a random variable, assume that $X \sim F(x)$ and that $\xsam$ is a random sample from $X$, then the empirical distribution function $\femp{n}$ converges almost surely to the theoretical distribution for each point $x \in \xsupp$. 
\begin{equation*}
    \prob_{F} \bigg( \bigghat{F_n}(x) \to F(x) \bigg) = 1.
\end{equation*}
\end{theorem}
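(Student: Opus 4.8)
The plan is to recognize that, for a fixed evaluation point, the empirical distribution function is nothing more than a sample mean of independent and identically distributed bounded random variables, and then to invoke the Strong Law of Large Numbers (SLLN). First I would fix a point $x \in \xsupp$ and introduce the indicator variables $Y_i = I(X_i \leq x)$ for $i = 1, \dots, n$. Since $\xsam$ is a random sample from $X$, the observations $X_i$ are independent and identically distributed, and hence so are the transformed variables $Y_i$. Each $Y_i$ is a Bernoulli random variable supported on $\{0,1\}$, so it is automatically integrable, with mean $E[Y_i] = \prob_{F}(X_i \leq x) = F(x)$ and variance $F(x)(1 - F(x)) \leq 1/4$.

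The key identity is then that the empirical distribution function evaluated at $x$ coincides exactly with the sample average of these indicators, $\femp{n} = \frac{1}{n}\sum_{i=1}^{n} Y_i$. Applying the SLLN to the i.i.d. integrable sequence $(Y_i)_{i \geq 1}$ gives $\frac{1}{n}\sum_{i=1}^{n} Y_i \to E[Y_1] = F(x)$ almost surely, which is precisely the asserted pointwise convergence $\femp{n} \to F(x)$ with $\prob_{F}$-probability one. If the SLLN is not to be taken as a black box, the boundedness of the $Y_i$ would also permit a self-contained derivation via a fourth-moment argument and the Borel-Cantelli lemma, since the centered indicators have uniformly bounded fourth moments and the resulting tail probabilities are summable.

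Because this is essentially a one-line corollary of the SLLN, I do not expect a deep obstacle in the pointwise statement itself; the only care needed is in justifying that the indicators inherit the i.i.d. structure from the sample and in confirming their integrability, both of which follow immediately from the sampling hypothesis and boundedness. The genuine difficulty of the Glivenko-Cantelli program lies elsewhere, namely in strengthening this pointwise almost sure convergence into uniform almost sure convergence over all $x \in \xsupp$ simultaneously, since that step requires controlling an uncountable union of exceptional null sets rather than a single one, and it is this strengthening that the main theorem of the paper must address.
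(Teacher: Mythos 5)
Your proposal is correct and matches the paper's own argument essentially step for step: the paper likewise fixes $x$, defines the indicator process $Y_i(x) = \delta_{X_i}((-\infty, x])$ (the same indicators you write as $I(X_i \leq x)$), notes they are i.i.d.\ Bernoulli with mean $F(x)$ and finite variance, and concludes by the Strong Law of Large Numbers that the sample mean, which is exactly $\femp{n}$, converges almost surely to $F(x)$. Your remarks on the fourth-moment/Borel--Cantelli alternative and on where the real difficulty of Glivenko--Cantelli lies are accurate but go beyond what the paper's proof of this theorem contains.
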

\begin{proof}

For $x \in \xsupp$, it is possible to index a stochastic process as follows:
\begin{equation*}
    Y_i(x) = \delta_{X_i}((-\infty, x])
\end{equation*}
Therefore, $Y_i(x) \in \{0, 1\}$ and
\begin{align*}
    \prob(Y_i(x) = 1) &= \prob(X_i \leq x), \\
    &= F(x), \enspace \forall X_i.
\end{align*}
Then for each fixed $x$ the random variable $Y_i(x)$ is a Bernoulli random variable that arises from indexing the stochastic process by points in the sample space of $X$. Moreover, for all $x \in \xsupp$ the $Y_i(x)$ have finite variances and means, 
\begin{align*}
    E(Y_i(x)) &= F(x) < +\infty, \\
    Var(Y_i(x)) &= (1 - F(x))F(x) < +\infty.
\end{align*}
Thus, by The Strong Law of Large Numbers \cite{rigProb, rossMod}, for fixed $x$ the sample mean of the $Y_i(x)$ converges almost surely to the mean, 
\begin{align*}
    \prob \bigg(\overline{Y_i(x)} \to E(Y_i(x)) \bigg) &= 1, \\
    \prob\bigg(\frac{1}{n}\sum\limits_{i=1}^{n} Y_i(x) \to F(x) \bigg) &= 1, \\
    \prob \bigg( \bigghat{F_n}(x) \to F(x) \bigg) &= 1.
\end{align*}
Since the choice of $x$ is arbitrary, then for all fixed $x \in \xsupp$ it is true that
\begin{equation*}
    \bigghat{F_n}(x) \to F(x), \enspace a.s.
\end{equation*}
\end{proof}
It is worth noting that the speed of convergence depends on how heavy the tails are, kurtosis and asymmetry of $F(x)$. The more probability there is concentrated far from the center, the more the process will require larger samples to cover those regions empirically.

\section{Constructive proof}
\begin{theorem}[Glivenko-Cantelli Theorem]
Let  $X \in \mathbb{R}$ be a random variable, assume $X \sim F(x)$ and that $\xsam$ is a random sample from $X$, then as the sample size approaches infinity the empirical distribution function converges uniformly almost surely to the theoretical distribution. In essence, 
\begin{equation*}
    \sup_{x \in \mathbb{R}} \bigg \{ | \femp{n} - F(x) | \bigg \} \to 0, \enspace \textit{almost surely}.
\end{equation*}
\end{theorem}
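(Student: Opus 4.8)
The plan is to leverage the pointwise almost sure convergence just established, together with the monotonicity and boundedness of $F$, to upgrade pointwise convergence into uniform convergence. The essential idea is that a bounded monotone function cannot oscillate too wildly, so controlling the discrepancy $|\femp{n} - F(x)|$ at finitely many well-chosen points suffices to control the supremum over all of $\mathbb{R}$.

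First I would fix $\epsilon > 0$ and construct a finite grid. Since $F$ is non-decreasing with limits $0$ and $1$ at $\mp\infty$, for $m = \lceil 1/\epsilon \rceil$ I can choose points $-\infty = t_0 < t_1 < \dots < t_m = +\infty$ such that $F(t_j^-) \leq j/m \leq F(t_j)$ for each $j$. This guarantees that across each half-open interval $[t_{j-1}, t_j)$ the increment of $F$, including any jump, is at most $1/m \leq \epsilon$. The construction must accommodate the discontinuities of $F$, which is where the left limits $F(t_j^-)$ enter. Next, for a fixed $x$ lying in $[t_{j-1}, t_j)$, I would exploit that both $F$ and the empirical distribution function are non-decreasing. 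Sandwiching $x$ between adjacent grid points yields
\begin{equation*}
    \bigghat{F_n}(t_{j-1}) - F(t_j^-) \leq \femp{n} - F(x) \leq \bigghat{F_n}(t_j^-) - F(t_{j-1}),
\end{equation*}
and adding and subtracting $F$ at the relevant grid point, the $\epsilon$-control on the increments of $F$ reduces the problem to bounding the error at the finitely many indices $j$. Taking the supremum over $x$ then gives
\begin{equation*}
    \sup_{x \in \mathbb{R}} | \femp{n} - F(x) | \leq \max_{0 \leq j \leq m} \bigg\{ |\bigghat{F_n}(t_j) - F(t_j)|, \, |\bigghat{F_n}(t_j^-) - F(t_j^-)| \bigg\} + \epsilon.
\end{equation*}

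The preliminary theorem already provides $\bigghat{F_n}(t_j) \to F(t_j)$ almost surely; I would additionally observe that applying the Strong Law of Large Numbers to the indicators $I(X_i < t_j)$ gives $\bigghat{F_n}(t_j^-) \to F(t_j^-)$ almost surely. Since this involves only finitely many points, the finite intersection of these probability-one events again has probability one, so the maximum above tends to $0$ almost surely, whence $\limsup_n \sup_{x} |\femp{n} - F(x)| \leq \epsilon$ almost surely. Finally, intersecting over the countable sequence $\epsilon = 1/k \to 0$, which is again a countable intersection of probability-one events, removes the $\epsilon$ and delivers the claimed uniform almost sure convergence.

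I expect the main obstacle to be the bookkeeping around the jumps of $F$. The grid must be chosen so that the left limits, and not merely the values, of $F$ are controlled, and the pointwise convergence result must be extended to these left limits; otherwise a discontinuity of size larger than $\epsilon$ could hide uncontrolled error strictly between consecutive grid points. The monotonicity of both $F$ and $\femp{n}$ is exactly the structural feature that makes the finite grid adequate, so the argument hinges on pairing that monotonicity with a careful treatment of the left-continuous behavior at jumps.
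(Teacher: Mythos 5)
Your proposal is correct, and it is the classical textbook proof of Glivenko--Cantelli --- but it takes a genuinely different route from the paper. You exploit the monotonicity and boundedness of $F$: a finite grid $t_0 < t_1 < \dots < t_m$ on which every increment of $F$ (jumps included, via the left limits $F(t_j^-)$) is at most $\epsilon$; a sandwich inequality that uses monotonicity of both $F$ and $\bigghat{F_n}$ to bound the sup over all of $\mathbb{R}$ by the error at the finitely many points $t_j$ and $t_j^-$; the strong law applied to the indicators $I(X_i < t_j)$ to handle the left limits (exactly the right fix for jumps); and a final countable intersection over $\epsilon = 1/k$. The paper instead runs an Egorov-style argument: it defines $\Theta_{k}(\epsilon) = \{ x \in \xsupp : |\bigghat{F_k}(x) - F(x)| < \epsilon \}$ and manipulates $\prob$ of unions and intersections of such sets, trying to pass from pointwise almost sure convergence to uniform convergence by showing that a ``bad'' set of points $x$ has probability zero. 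The comparison is instructive. Your approach keeps the two roles cleanly separated: the randomness lives in the sample $\xsam$, while $x$ is a deterministic index, and monotonicity is precisely the structural input that lets finitely many random quantities control the supremum over every $x \in \mathbb{R}$, including $x$ in regions to which $F$ assigns no mass. The paper's approach measures subsets of $\xsupp$ with $\prob_F$, so even taken at face value it controls $|\bigghat{F_n}(x) - F(x)|$ only for $x$ outside a $\prob_F$-null set, which is strictly weaker than the supremum over all of $\mathbb{R}$; and since pointwise convergence alone can never be upgraded to uniform convergence (Egorov's theorem yields only almost-uniform convergence on a finite measure space), some additional structure such as monotonicity is indispensable. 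Your proof supplies exactly that ingredient and is the rigorous standard argument; the paper's does not, so you should regard your route not merely as an alternative but as the one that actually closes the gap.
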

The following proof is motivated by ideas used in the proof of Egorov's theorem in \cite{grabs}.
\begin{proof}
Assume that $X \sim F(X)$ and let $\xsam$ be a random sample from $X$. Define $\Theta$ as:
\begin{equation*}
    \Theta = \{ x\in \xsupp : \femp{n} \to F(x) \}
\end{equation*}
Then by theorem (1), 
\begin{equation*}
    \prob( \Theta ) = 1.
\end{equation*}
Now for a fixed $k \in \mathbb{N}$ and $\epsilon > 0$ define $\Theta_{k}(\epsilon)$ as:
\begin{equation*}
    \Theta_{k}(\epsilon) = \{ x \in \xsupp : | \femp{k} - F(x) | < \epsilon \}.
\end{equation*}
Thus, it is possible to verify that
$$    x \in \bigcap\limits_{k=n}^{\infty} \Theta_{k}(\epsilon)  \implies | \femp{k} - F(x) | < \epsilon, \enspace \forall k \geq n .$$
In essence, once the points fall into the $\epsilon$ ball no further empirical distribution functions in the sequence escape from the convergence radius. 
Also, for all $\epsilon > 0$ and $x \in \Theta$ there exists a $n_{\epsilon}$ such that if $k \geq n_{\epsilon}$, then 
\begin{equation*}
    | \femp{k} - F(x) | < \epsilon,
\end{equation*}
Thus, 
$$ x \in  \bigcap\limits_{k=n_{\epsilon}}^{\infty} \Theta_{k}(\epsilon) . $$
And, 
$$ \Theta \subseteq \bigcap\limits_{k=n_{\epsilon}}^{\infty} \Theta_{k}(\epsilon). $$
Furthermore, for all $x \in \xsupp$, by theorem (1), as the sample size increases, it is possible to find the required $n_{\epsilon}$ among the integers. Therefore, the union -over all integers of points where the sequence does not escape the convergence radius- will include $n_{\epsilon}$,
$$ x \in \bigcup\limits_{n=1}^{\infty} \bigcap\limits_{k=n}^{\infty} \Theta_{k}(\epsilon). $$
Hence, $x \in \Theta$ implies that $x \in \bigcup\limits_{n=1}^{\infty} \bigcap\limits_{k=n}^{\infty} \Theta_{k}(\epsilon)$, so
$\Theta \subseteq  \bigcup\limits_{n=1}^{\infty} \bigcap\limits_{k=n}^{\infty} \Theta_{k}(\epsilon) $. Now, consider the following:
\begin{align*}
    \prob (\xsupp \setminus \Theta) &= \prob(\xsupp ) - \prob(\Theta) \\
    &= 1 - 1 \\
    &= 0, \\
    \therefore \prob(\xsupp ) &= \prob(\Theta ).
\end{align*}
So that
\begin{align*}
    \prob(\xsupp) &= \prob(\Theta) \\
    &\leq \prob \bigg( \bigcup\limits_{n=1}^{\infty} \bigcap\limits_{k=n}^{\infty} \Theta_{k}(\epsilon)  \bigg) \\
    &\leq \prob( \xsupp). \\
    &\therefore  \prob \bigg( \bigcup\limits_{n=1}^{\infty} \bigcap\limits_{k=n}^{\infty} \Theta_{k}(\epsilon)  \bigg) 
    = \prob(\xsupp).
\end{align*}
And, it is possible to find $n_{\epsilon}$ to get 
\begin{align*}
    \prob(\xsupp) &= \prob(\Theta) \\
    &\leq \prob \bigg( \bigcap\limits_{k=n_{\epsilon}}^{\infty} \Theta_{k}(\epsilon)  \bigg) \\
    &\leq \prob( \xsupp). \\
    &\therefore  \prob \bigg( \bigcap\limits_{k=n_{\epsilon}}^{\infty} \Theta_{k}(\epsilon)  \bigg) 
    = \prob(\xsupp).
\end{align*}
Then, 
\begin{align*}
    \prob \bigg ( \bigcup\limits_{k = n_{\epsilon}}^{\infty} \{x \in \xsupp : | \femp{k} - F(x) | \geq \epsilon \} \bigg ) 
    &= \prob \bigg ( \xsupp \setminus  \bigcap\limits_{k=n_{\epsilon}}^{\infty} \Theta_{k}(\epsilon) \bigg) \\
    &= \prob (\xsupp) - \prob \bigg ( \bigcap\limits_{k=n_{\epsilon}}^{\infty} \Theta_{k}(\epsilon) \bigg) \\
    &= 1 - 1, \\
    \therefore \prob \bigg (  \bigcup\limits_{k = n_{\epsilon}}^{\infty} \{x \in \xsupp : | \femp{k} - F(x) | \geq \epsilon \} \bigg ) &= 0.
\end{align*}
Now fix $m$ and $\tilde{\epsilon} > 0$, and note that, given the previous limits, it is possible to find $n_m$ such that 
\begin{equation*}
    \prob \bigg ( \bigcup\limits_{k= n_m}^{\infty}  \{x \in \xsupp : | \femp{k} - F(x) | \geq \frac{1}{m} \} \bigg ) < \frac{\tilde{\epsilon}}{2^m}. 
\end{equation*}
Then, for all possible values of $m$, it is true that
\begin{align*}
    \prob \bigg ( \bigcup\limits_{m=1}^{\infty} \bigcup\limits_{k= n_m}^{\infty}  \{x \in \xsupp : | \femp{k} - F(x) | \geq \frac{1}{m} \} \bigg ) &\leq \sum\limits_{m = 1}^{\infty} \prob \bigg ( \bigcup\limits_{k= n_m}^{\infty}  \{x \in \xsupp : | \femp{k} - F(x) | \geq \frac{1}{m} \} \bigg ) \\
    &< \sum\limits_{m = 1}^{\infty} \frac{\tilde{\epsilon}}{2^{m}}\\
    &= \tilde{\epsilon} \sum\limits_{m = 1}^{\infty} \frac{1}{2^{m}} \\
    &= \tilde{\epsilon}.
\end{align*}
Since $\tilde{\epsilon}$ is arbitrary, as $\tilde{\epsilon} \to 0$, the limit results in:
\begin{equation*}
     \prob \bigg ( \bigcup\limits_{m=1}^{\infty} \bigcup\limits_{k= n_m}^{\infty}  \{x \in \xsupp : | \femp{k} - F(x) | \geq \frac{1}{m} \} \bigg ) = 0
\end{equation*}
However, if $x \notin \bigcup\limits_{m=1}^{\infty} \bigcup\limits_{k= n_m}^{\infty}  \{x \in \xsupp : | \femp{k} - F(x) | \geq \frac{1}{m} \} $, then 
for all $k \geq n_m$, $m \in \mathbb{N}$, it is true that
$$ | \femp{k} - F(x) | < \frac{1}{m} . $$
In essence, if $x \notin \bigcup\limits_{m=1}^{\infty} \bigcup\limits_{k= n_m}^{\infty}  \{x \in \xsupp : | \femp{k} - F(x) | \geq \frac{1}{m} \}  $, then $\femp{n} \to F(x)$ uniformly. And, note that $x \notin \bigcup\limits_{m=1}^{\infty} \bigcup\limits_{k= n_m}^{\infty}  \{x \in \xsupp : | \femp{k} - F(x) | \geq \frac{1}{m} \}  $ has the following probability
\begin{align*}
    \prob (\xsupp) - \prob \bigg ( \bigcup\limits_{m=1}^{\infty} \bigcup\limits_{k= n_m}^{\infty}  \{x \in \xsupp : | \femp{k} - F(x) | \geq \frac{1}{m} \}  \bigg)
    &= 1 - 0 \\
    &= 1.
\end{align*}
Therefore, the probability that $\femp{n} \to F(x)$ uniformly is equal to one, in essence, $\femp{n} \to F(x)$ uniformly almost surely.
\begin{equation*}
    \prob_{F} \bigg( \sup_{x \in \mathbb{R}} \bigg \{ | \femp{n} - F(x) | \bigg \} \to 0 \bigg) = 1.
\end{equation*}
\end{proof}
\section{Discussion}
The proof exhibits how the observed variation in sample $\xsam$ represents the dispersion that is characterized by the model $F$. Thus, as the sample size increases, the empirical process will be a more efficient representation of the model. Nonetheless, the theorem requires the use of nice intervals, and in higher dimensions classes of sets, as well as strong assumptions about independence between different observations. However, it also motivates further study of how empirical processes and nonparametric methods can be used to study the concentration of measure that characterizes the data under analysis.
\section*{Appendix}
The sampling process eventually covers all areas with positive probability. If $X$ is discrete, then 
\begin{align*}
    \prob(x_0 \notin \xsam) &= \prob(X_1 \neq x_0, \cdots, X_n \neq x_0) \\
    &= \prod\limits_{i=1}^{n} \prob(X_i \neq x_0) \\
    &= \prod\limits_{i=1}^{n}(1 - \prob(X_i = x_0)) \\
    &= (1 - f(x_0))^{n}
\end{align*}
Thus if $\prob(X = x_0) > 0$, then as $n \to +\infty$
\begin{equation*}
     \prob(x_0 \notin \xsam) \to 0.
\end{equation*}
Similarly if $I^{n} = (X^{(1)}, X^{(n)}]$ denotes the range a sample from an absolutely continuous random variable covers, then the probability that the sampling process will not cover a particular point is:
\begin{align*}
    \prob ( x_0 \notin I^{n}) &= \prob(x_0 < X^{(1)} \bigcup x_0 > X^{(n)}) \\
    &= \prob(x_0 < X^{(1)}) + \prob(x_0 >  X^{(n)}) 
\end{align*}
Note that the intersection is an absurdity ($x_0 < x^{(1)}$ and $x_0 > x^{(n)} \geq x^{(1)}$), thus, it is the empty set and it has zero probability. Moreover, as $n \to +\infty$ the sampling process will eventually sample from less concentrated regions. Suppose that $0 < F(x_0) < 1$, then
\begin{align*}
     \prob(x_0 < X^{(1)}) &= 1 - 1 + (1 - F(x_0))^n \\
     &\to 0, \enspace n \to +\infty.
\end{align*}
And, 
\begin{align*}
    \prob(x_0 >  X^{(n)})  &= (F(x_0))^{n} \\
    &\to 0, \enspace n \to +\infty.
\end{align*}
Therefore as the sample size increases the range of coverage grows to capture both the left and right tails. In essence, if $0 < F(x) < 1$, then as $n \to +\infty$, 
$$ \prob(x_0 < X^{(1)}) + \prob(x_0 >  X^{(n)}) \to 0 . $$
This motivates the study of how samples from the empirical distribution and a possible guess/initial distribution could report the regions in which $X$ appears to concentrate more probability. Suppose that $\{I_k\}_{k=1}^{m}$ is a partition of $\mathbb{R}$, then the observed clusters should resemble the concentration that a Dirichlet distribution defined over $\{\prob_{F}(I_1), \cdots, \prob_{F}(I_m)\}$ establishes for each interval $I_k$. This connection between order statistics, the empirical distribution function and the measure concentrated in each observed and/or theorized interval sets and motivates the construction of the Dirichlet process and other nonparametric techniques. Moreover, it suggests that boundaries at the tail might need specific techniques to observe such data in a sample.
\subsection*{Notation}
$\delta_{x}(A)$ is Dirac measure with respect to the set $A$ and point $x$.
\begin{equation*}
    \delta_{X_i}(A) = \begin{cases*}
                        1 \quad \textit{if } X_i \in A , \\
                        0 \quad \textit{if } X_i \notin A .  
                    \end{cases*}
\end{equation*}
\begin{itemize}
    \item $F$: Distribution function of the random variable $X$;
    \item $\prob$: Probability measure of said event;
    \item $\prob_{F}$: Probability measure of the event under model's $F$ assumption;
    \item $X^{(1)}$: Smallest order statistic;
    \item $X^{(n)}$: Largest order statistic;
    \item $\xsupp$: Sample space of the random variable $X$, all possible observations belong to this set;
    \item $\xsam$: Random sample assumed to arise from the model $F$. A finite collection of independent and identically distributed random variables that are described by the dispersion model $F$;
    \item $I(X \leq x)$: Indicator function for $X$ with respect to the set $(-\infty, x]$.
\end{itemize}

\addcontentsline{toc}{section}{References}
\bibliographystyle{unsrtnat}
\bibliography{gg_proof_DS.bib}
\end{document}